\newtheorem{theorem}{Theorem}
\newtheorem{lemma}[theorem]{Lemma}
\theoremstyle{definition}
\newtheorem{conjecture}[theorem]{Conjecture}
\newcommand{\cL}{{\mathcal L}}
\newcommand{\Cb}{{\mathbb C}}
\newcommand{\Zb}{{\mathbb Z}}
\newcommand{\Tb}{{\mathbb T}}
\newcommand{\Rb}{{\mathbb R}}
\newcommand{\tr}{{\rm tr}}
\begin{document}

\title[Appendix]{Appendix to V. Mathai and J. Rosenberg's paper  ``A noncommutative sigma-model"}
\author{Hanfeng Li}
\thanks{Partially supported by NSF Grant DMS-0701414.}
\address{Department of Mathematics \\
SUNY at Buffalo \\
Buffalo, NY 14260-2900, U.S.A.} \email{hfli@math.buffalo.edu}
\date{December 1, 2009}

\subjclass[2000]{Primary 58B34; Secondary 46L87}
\keywords{noncommutative tori, energy}

\maketitle

This short note is an appendix to \cite{MR}.

Let $\theta\in \Rb$. Denote by $A_{\theta}$ the rotation $C^*$-algebra generated by unitaries $U$ and $V$ subject
to $UV=e^{2\pi i \theta}VU$, and by $A_{\theta}^{\infty}$ its canonical smooth subalgebra.
Denote by $\tr$ the canonical faithful tracial state on $A_{\theta}$ determined by $\tr(U^mV^n)=\delta_{m, 0}\delta_{n, 0}$ for
all $m, n\in \Zb$.
%Denote by $H$ the Hilbert space associated to  the GNS representation of $A_{\theta}$ for $\tr$, and denote
% by $\|\cdot \|_2$ its norm. We shall identify $A_{\theta}$ as a subspace of $H$ canonically.
Denote by $\delta_1$ and $\delta_2$ the unbounded closed $*$-derivations of $A_{\theta}$ defined on some dense subalgebras of $A_{\theta}$ and
determined by
$\delta_1(U)=2\pi i U$, $\delta_1(V)=0$, and $\delta_2(U)=0$, $\delta_2(V)=2\pi i V$. The {\it energy} \cite{Rosenberg}, $E(u)$, of a unitary $u$ in $A_{\theta}$
is defined as
\begin{eqnarray} \label{energy:eq}
 E(u)=\frac{1}{2}\tr(\delta_1(u)^*\delta_1(u)+\delta_2(u)^*\delta_2(u))
\end{eqnarray}
%$$ E(u)=\frac{1}{2}(\|\delta_1(u)\|_2^2+\|\delta_2(u)\|_2^2)$$
when $u$ belongs to the domains of $\delta_1$ and $\delta_2$, and $\infty$ otherwise.
%A unitary $u$ in $A_{\theta}^{\infty}$ is called {\it harmonic} if it is a critical point of the functional $E$.

Rosenberg has the following conjecture \cite[Conjecture 5.4]{Rosenberg}.
%(I am not sure whether Rosenberg is assuming $\theta$ to be irrational or not):

\begin{conjecture} \label{unitary:conj}
For any $m, n\in \Zb$, in the connected component of $U^mV^n$ in the unitary group of $A_{\theta}^{\infty}$, the functional $E$ takes its minimal value exactly at the scalar multiples
of $U^mV^n$.
% for some $m, n\in \Zb$.
%These are the only harmonic unitaries in this component.
\end{conjecture}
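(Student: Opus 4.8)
The plan is to convert the variational statement into a Pythagorean decomposition of the energy after factoring out the topological part $U^mV^n$; minimality will then follow from the mere positivity of the energy of a unitary in the identity component. First I would record the Fourier picture: in the GNS space of $\tr$ the monomials $\{U^pV^q\}_{(p,q)\in\Zb^2}$ form an orthonormal basis, and each $\delta_j$ acts diagonally, with $\delta_1(U^pV^q)=2\pi i\,p\,U^pV^q$ and $\delta_2(U^pV^q)=2\pi i\,q\,U^pV^q$. Hence for $x=\sum_{p,q}a_{p,q}U^pV^q\in A_\theta^\infty$ one has
\begin{equation*}
E(x)=2\pi^2\sum_{(p,q)\in\Zb^2}(p^2+q^2)\,|a_{p,q}|^2 ,
\end{equation*}
so in particular $E(U^mV^n)=2\pi^2(m^2+n^2)$ and $E$ vanishes exactly on scalars. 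The entire point is therefore to see how membership in the connected component of $U^mV^n$ obstructs lowering the ``mass'' $m^2+n^2$.

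Given a unitary $u$ in that component, I would set $w:=(U^mV^n)^*u\in A_\theta^\infty$, which is a unitary lying in the connected component of the identity, and expand $\delta_j(u)=\delta_j(U^mV^n)\,w+U^mV^n\,\delta_j(w)$ by the Leibniz rule. Using that $U^mV^n$ is unitary and that $\tr(x^*)=\overline{\tr(x)}$, a direct expansion of $\tr(\delta_j(u)^*\delta_j(u))$ gives
\begin{equation*}
E(u)=E(U^mV^n)+E(w)+\pi i\,m\big(\overline{c_1(w)}-c_1(w)\big)+\pi i\,n\big(\overline{c_2(w)}-c_2(w)\big),
\end{equation*}
where $c_j(w):=\tr(w^*\delta_j(w))$. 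Everything thus reduces to the cross terms, which are controlled by the two scalars $c_1(w)$ and $c_2(w)$.

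The crucial, and I expect hardest, step is to show $c_j(w)=0$ whenever $w$ is a smooth unitary in the identity component. I would prove that $w\mapsto c_j(w)$ is locally constant by a homotopy-invariance computation: along a piecewise smooth path $t\mapsto w_t$ of unitaries, writing $h_t=w_t^*\dot w_t$ (which is anti-self-adjoint) and using the Leibniz rule, the traciality of $\tr$, and the identity $\tr\circ\delta_j=0$, one gets $\frac{d}{dt}c_j(w_t)=0$. Since $c_j(1)=\tr(\delta_j(1))=0$ and the unitary group of the Fréchet algebra $A_\theta^\infty$ is locally path connected, $c_j$ vanishes on the whole identity component. (Conceptually $\tfrac{1}{2\pi i}c_j(w)$ is the integral pairing of $[w]\in K_1(A_\theta)$ with a cyclic $1$-cocycle, which is zero on the identity component; the care needed is in passing from connectedness to piecewise smooth paths inside $A_\theta^\infty$ and in justifying differentiation under the trace.)

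With $c_1(w)=c_2(w)=0$ the cross terms vanish and the energy splits as $E(u)=E(U^mV^n)+E(w)$. As $E(w)=\tfrac12(\|\delta_1(w)\|_2^2+\|\delta_2(w)\|_2^2)\ge 0$, this yields $E(u)\ge E(U^mV^n)$ throughout the component. Equality forces $E(w)=0$, i.e.\ $\delta_1(w)=\delta_2(w)=0$; by the diagonal action of the $\delta_j$ all Fourier coefficients of $w$ off the origin vanish, so $w$ is a scalar, and unitarity gives $w=\lambda$ with $|\lambda|=1$. Thus $E$ attains its minimum on the component exactly at $u=\lambda U^mV^n$, which is the assertion of the conjecture.
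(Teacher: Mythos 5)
Your argument is correct and is essentially the paper's proof in a multiplicative guise: setting $w=(U^mV^n)^*u$, one has $\delta_j(w)=w\bigl(a_j-\tr(a_j)\bigr)$ with $a_j=u^*\delta_j(u)$, so your splitting $E(u)=E(U^mV^n)+E(w)$ is literally the paper's Pythagorean decomposition $\|a_j\|_2^2=|\tr(a_j)|^2+\|a_j-\tr(a_j)\|_2^2$, and your key claim $\tr(w^*\delta_j(w))=0$ on the identity component is the special case of the $K_1$-invariance of $\tau(v^*\delta(v))$ that the paper cites to Pusz--Woronowicz. The only real difference is that you prove that invariance by the homotopy computation rather than citing it, which is a welcome but not essentially new addition.
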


For a $*$-endomorphism $\varphi$ of $A_{\theta}^{\infty}$, its {\it energy} \cite{MR},  $\cL(\varphi)$, is defined as $2E(\varphi(U))+2E(\varphi(V))$.
Mathai and Rosenberg's Conjecture 3.1 in  \cite{MR} about the minimal value of $\cL(\varphi)$
follows directly from Conjecture~\ref{unitary:conj}.

%(I am not sure whether Mathai and Rosenberg are assuming $\varphi$ to
%be unital or not):
%
%\begin{conjecture} \label{map:conj}
%Let
%\begin{eqnarray*}
%\cA=\left(
%\begin{matrix}
%p & q \\
%r & s
%\end{matrix}
%\right) \in \SL(2, \Zb).
%\end{eqnarray*}
%Among the $*$-endomorphisms $\varphi$ of $A_{\theta}^{\infty}$ inducing $\cA$ on $K_1(A_{\theta})\cong \Zb^2$ , $\cL(\varphi)$ gets minimal values
%at the $\varphi's$ sending $U$ and $V$ to scalar multiples of $U^pV^r$ and $U^qV^s$ respectively.
%\end{conjecture}

%When $\theta$ is irrational, the natural map $\pi_0(U(A_{\theta}))\rightarrow K_1(A_{\theta})$ is a bijection \cite[Theorem 3.4]{Rieffel87}.
%Since $A_{\theta}^{\infty}$ is a smooth algebra in $A_{\theta}$, the natural map $\pi_0(U(A_{\theta}^{\infty}))\rightarrow \pi_0(U(A_{\theta}))$ is a %bijection.

Denote by $H$ the Hilbert space associated to  the GNS representation of $A_{\theta}$ for $\tr$, and denote
 by $\|\cdot \|_2$ its norm. We shall identify $A_{\theta}$ as a subspace of $H$ as usual.
Then (\ref{energy:eq}) can be rewritten as
$$ E(u)=\frac{1}{2}(\|\delta_1(u)\|_2^2+\|\delta_2(u)\|_2^2).$$

Now we prove Conjecture~\ref{unitary:conj}, and hence also prove Conjecture 3.1 of \cite{MR}.
%The following theorem proves the first part of Conjecture~\ref{unitary:conj}, and implies the unital $*$-endomorphism case of %Conjecture~\ref{map:conj}.

\begin{theorem} \label{min:thm}
Let $\theta \in \Rb$ and $m, n\in \Zb$. Let $u\in A_{\theta}$ be a unitary whose class in $K_1(A_{\theta})$ is the same as that of $U^mV^n$.
Then $E(u)\ge E(U^mV^n)$, and ``='' holds if and only if $u$ is a scalar multiple of $U^mV^n$.
\end{theorem}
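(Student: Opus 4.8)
The plan is to diagonalize everything in the orthonormal Fourier basis $\{U^pV^q\}_{p,q\in\Zb}$ of $H$ and then reduce the whole statement to an elementary inequality about probability distributions on $\Zb^2$. Writing $u=\sum_{p,q}c_{p,q}U^pV^q$, the Leibniz rule gives $\delta_1(U^pV^q)=2\pi i\,p\,U^pV^q$ and $\delta_2(U^pV^q)=2\pi i\,q\,U^pV^q$, so both derivations act diagonally and
$$E(u)=2\pi^2\sum_{p,q}(p^2+q^2)\,|c_{p,q}|^2.$$
Since $u$ is unitary, $\sum_{p,q}|c_{p,q}|^2=\tr(u^*u)=1$, so $w_{p,q}:=|c_{p,q}|^2$ is a probability distribution on $\Zb^2$. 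If $E(u)=\infty$ there is nothing to prove, so I may assume $u$ lies in the domains of $\delta_1,\delta_2$, i.e. $\sum_{p,q}(p^2+q^2)w_{p,q}<\infty$.

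The crucial input is that the two quantities
$$I_1(u):=\frac{1}{2\pi i}\tr(u^*\delta_1 u)=\sum_{p,q}p\,w_{p,q}, \qquad I_2(u):=\frac{1}{2\pi i}\tr(u^*\delta_2 u)=\sum_{p,q}q\,w_{p,q}$$
are invariants of the $K_1$-class of $u$. I would prove this by the standard route: $\tau_j(a_0,a_1):=\tr(a_0\,\delta_j a_1)$ are cyclic $1$-cocycles on $A_\theta^\infty$ (cyclicity and the cocycle identity both follow from the integration-by-parts rule $\tr(\delta_j(ab))=0$), and $I_j$ is precisely the Connes pairing of $[u]$ with $[\tau_j]$. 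Homotopy invariance is a one-line computation: along a smooth path of unitaries $u_t$, using $\delta_j(u_t^*u_t)=0$, the identity $\delta_j(a^*)=\delta_j(a)^*$, and cyclicity of $\tr$, one finds $\frac{d}{dt}\tr(u_t^*\delta_j u_t)=0$. Evaluating on the generators gives $I_1(U^pV^q)=p$ and $I_2(U^pV^q)=q$, and since $[U^mV^n]=m[U]+n[V]$ in $K_1(A_\theta)$, any $u$ with $[u]=[U^mV^n]$ satisfies $I_1(u)=m$ and $I_2(u)=n$.

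With these three constraints in hand the inequality becomes a variance estimate. Regarding $w$ as a probability measure on $\Zb^2$ with mean $(m,n)$, we have $\sum_{p,q}p^2w_{p,q}\ge\bigl(\sum_{p,q}p\,w_{p,q}\bigr)^2=m^2$ and likewise $\sum_{p,q}q^2w_{p,q}\ge n^2$, these being exactly the statements $\mathrm{Var}\ge 0$ (equivalently, Jensen for the convex map $(p,q)\mapsto p^2+q^2$). Hence $E(u)\ge 2\pi^2(m^2+n^2)=E(U^mV^n)$. Equality forces both variances to vanish, so $w$ is a single point mass; its mean being $(m,n)$, that atom is $(m,n)$, whence $w_{m,n}=1$ and all other coefficients vanish. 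Thus $u=c_{m,n}U^mV^n$ with $|c_{m,n}|=1$, i.e. $u$ is a scalar multiple of $U^mV^n$, which settles the equality case.

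The step I expect to require the most care is establishing that $I_1,I_2$ are genuinely determined by the $K_1$-class and not merely by homotopy inside the smooth unitary group. Two points must be handled: first, the passage to matrix amplifications in the definition of $K_1$, which is harmless because $\delta_j$ annihilates the identity and the cocycle pairing is stable under $\oplus\,1_k$; and second, the fact that a finite-energy unitary $u\in A_\theta$ need only lie in the Sobolev-type domain of $\delta_j$ rather than in $A_\theta^\infty$, so one must justify that the pairing formula still computes the $K_1$-invariant for such $u$ (for instance by approximating $u$ in the graph norm of $\delta_1,\delta_2$ by smooth unitaries in the same class, using that $A_\theta^\infty$ is dense and closed under holomorphic functional calculus, so that $K_1(A_\theta^\infty)=K_1(A_\theta)$). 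Once this identification is secured, the remainder is the elementary optimization above.
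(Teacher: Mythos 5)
Your proposal is correct and is essentially the paper's proof in Fourier coordinates: the identity $E(u)=2\pi^2\sum(p^2+q^2)|c_{p,q}|^2$ together with the variance inequality is exactly the paper's decomposition $\|u^*\delta_j(u)\|_2^2=|\tr(u^*\delta_j(u))|^2+\|u^*\delta_j(u)-\tr(u^*\delta_j(u))\|_2^2$, and the key input in both arguments is the $K_1$-invariance of $\tr(u^*\delta_j(u))$. The only difference is that the paper simply cites Pusz--Woronowicz for that invariance (which also disposes of the domain issue you rightly flag, since their result is stated for closed derivations and unitaries merely in the domain), whereas you sketch the cyclic-cocycle proof.
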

\begin{proof}
We may assume that $u$ belongs to the domains of $\delta_1$ and $\delta_2$.
Set $a_j=u^*\delta_j(u)$ for $j=1, 2$. For any closed $*$-derivation $\delta$ defined on a dense subset of a unital $C^*$-algebra $A$
and any tracial state $\tau$ of $A$ vanishing on the range of $\delta$,
if unitaries $v_1$ and $v_2$ in the domain of $\delta$ have the same class in $K_1(A)$, then $\tau(v^*_1\delta(v_1))=\tau(v^*_2\delta(v_2))$
\cite[page 281]{PW}.  Thus
\begin{eqnarray*}
\tr(a_j)=\tr((U^mV^n)^*\delta_j(U^mV^n))=
\left\{
\begin{array}{lr}
2\pi i m & \mbox{ if } j=1;\\
2\pi i n & \mbox{ if } j=2.
\end{array}
\right.
\end{eqnarray*}
We have
\begin{eqnarray*}
\|\delta_j(u)\|_2^2&=&\|a_j\|_2^2=\|\tr(a_j)\|_2^2+\|a_j-\tr(a_j)\|_2^2\\
&\ge &\|\tr(a_j)\|_2^2=|\tr(a_j)|^2\\
&=&\left\{
\begin{array}{lr}
4\pi^2 m^2 & \mbox{ if } j=1;\\
4\pi^2 n^2 & \mbox{ if } j=2,
\end{array}
\right.
\end{eqnarray*}
and
``='' holds if and only if $a_j=\tr(a_j)$. It follows
that $E(u)\ge 2\pi^2(m^2+n^2)$, and ``='' holds if and only if $\delta_1(u)=2\pi im u$ and $\delta_2(u)=2\pi in u$.
Now the theorem follows from the fact that the elements $a$ in $A_{\theta}$ satisfying $\delta_1(a)=2\pi i ma$ and $\delta_2(a)=2\pi i na$ are exactly the
scalar multiples of $U^mV^n$.
\end{proof}

When $\theta\in \Rb$ is irrational, the $C^*$-algebra $A_{\theta}$ is simple \cite[Theorem 3.7]{Slawny},
has real rank zero \cite[Theorem 1.5]{BKR}, and is
an $A\Tb$-algebra \cite[Theorem 4]{EE}. It is a result of Elliott that for any pair of
$A\Tb$-algebras with real rank zero, every homomorphism between their graded $K$-groups preserving the
graded dimension range is induced by a $*$-homomorphism between them \cite[Theorem 7.3]{Elliott}.
The graded dimension range of
a unital simple $A\Tb$-algebra $A$ is the subset
$\{(g_0, g_1)\in K_0(A)\oplus K_1(A): 0 \lvertneqq g_0\le [1_A]_0\}\cup (0, 0)$ of the graded $K$-group
$K_0(A)\oplus K_1(A)$ \cite[page 51]{Rordam}. It
follows that, when $\theta$ is irrational,  for any group endomorphism $\psi$ of $K_1(A_{\theta})$,
there is a unital $*$-endomorphism $\varphi$ of $A_{\theta}$ inducing $\psi$ on $K_1(A_{\theta})$.
%It was shown in \cite{BCEN, CEGJ} that if $\theta$ is irrational and $\varphi$ restricts to a $*$-automorphism of $A_{\theta}^{\infty}$,
%then $\psi$ must be an automorphism of the rank-two free abelian group $K_1(A_{\theta})$
%with determinant $1$.
It is an open question when one can choose
$\varphi$ to be smooth in the sense of preserving $A_{\theta}^{\infty}$, though
it was shown in \cite{BCEN, CEGJ} that if $\theta$ is irrational and $\varphi$ restricts to a $*$-automorphism of $A_{\theta}^{\infty}$,
then $\psi$ must be an automorphism of the rank-two free abelian group $K_1(A_{\theta})$
with determinant $1$.
  When $\psi$ is the zero endomorphism, from Theorem~\ref{min:thm}
one might guess that $\cL(\varphi)$ could be arbitrarily small. It is somehow surprising, as we show now, that in fact there is a common  positive lower bound for
$\cL(\varphi)$ for all $0<\theta<1$. This answers a question Rosenberg raised at the Noncommutative Geometry workshop at Oberwolfach in September 2009.

\begin{theorem} \label{lower bound:thm}
Suppose that $0<\theta<1$. For any unital $*$-endomorphism $\varphi$ of $A_{\theta}$, one has $\cL(\varphi)\ge 4(3-\sqrt{5})\pi^2$.
\end{theorem}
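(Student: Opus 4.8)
The plan is to work directly with the two unitaries $x=\varphi(U)$ and $y=\varphi(V)$. Since $\varphi$ is a unital $*$-endomorphism, both $x$ and $y$ are unitaries in $A_{\theta}$, and applying $\varphi$ to the defining relation $UV=e^{2\pi i\theta}VU$ shows that they satisfy $xy=e^{2\pi i\theta}yx$. By definition $\cL(\varphi)=2E(x)+2E(y)$, and I may assume this is finite, i.e. that $x$ and $y$ lie in the domains of $\delta_1$ and $\delta_2$ (otherwise the asserted inequality is trivial). The guiding idea is that the relation $xy=e^{2\pi i\theta}yx$ prevents $x$ and $y$ from both being close to scalars, and the cleanest way to make this quantitative is through the trace.

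The key algebraic step I would record is that both generators are trace-free. Multiplying $xy=e^{2\pi i\theta}yx$ on the right by $x^{-1}=x^{*}$ gives $xyx^{-1}=e^{2\pi i\theta}y$; applying $\tr$ and using traciality, $\tr(y)=\tr(xyx^{-1})=e^{2\pi i\theta}\tr(y)$, so $(1-e^{2\pi i\theta})\tr(y)=0$. Since $0<\theta<1$ we have $e^{2\pi i\theta}\neq 1$, whence $\tr(y)=0$; the symmetric manipulation (left-multiplying by $y^{-1}$ to get $y^{-1}xy=e^{2\pi i\theta}x$) gives $\tr(x)=0$. This is exactly where the hypothesis $0<\theta<1$ is used.

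Next I would convert this into an energy estimate by Parseval. The monomials $\{U^mV^n\}_{m,n\in\Zb}$ form an orthonormal basis of $H$, and $\delta_1,\delta_2$ act diagonally on it, so writing $x=\sum_{m,n}a_{mn}U^mV^n$ one has
\begin{equation*}
2E(x)=\|\delta_1(x)\|_2^2+\|\delta_2(x)\|_2^2=4\pi^2\sum_{m,n}(m^2+n^2)|a_{mn}|^2.
\end{equation*}
Because $m^2+n^2\ge 1$ for every $(m,n)\neq(0,0)$ and $a_{00}=\tr(x)=0$, this is at least $4\pi^2\sum_{(m,n)\neq(0,0)}|a_{mn}|^2=4\pi^2\|x\|_2^2=4\pi^2$, the final equality holding since $x$ is unitary. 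The identical estimate for $y$ then yields $\cL(\varphi)=2E(x)+2E(y)\ge 8\pi^2$, which is in particular at least the claimed $4(3-\sqrt5)\pi^2$.

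I expect the main difficulty here to be conceptual rather than computational: recognizing that the commutation relation forces $\tr(x)=\tr(y)=0$, after which the estimate is immediate. The only routine technical points are the reduction to the finite-energy case and the termwise computation of $2E(x)$. It is worth noting that this route in fact identifies the exact minimum, since the value $8\pi^2$ is attained by the identity endomorphism $x=U$, $y=V$; thus it proves the theorem with the stated constant $4(3-\sqrt5)\pi^2$ appearing as a (weaker) consequence.
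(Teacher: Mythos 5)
Your proof is correct, and it takes a genuinely different --- and in fact sharper --- route than the paper. The paper reduces the theorem to a lemma about unitaries $u,v$ with $uv=\lambda vu$, $\lambda\neq 1$, but there it only extracts the identity $\tr(uv)=0$ from traciality; it then writes $-\tr(u)\tr(v)=\tr((u-\tr(u))v)$, bounds $|\tr(u)|^2\ge 1-\tfrac{1}{2\pi^2}E(u)$ and $|\tr((u-\tr(u))v)|\le(\tfrac{1}{2\pi^2}E(u))^{1/2}$ via the same Parseval computation you use, and finally solves a small optimization problem in $t=\tfrac{1}{2\pi^2}E(u)$, $s=\tfrac{1}{2\pi^2}E(v)$ to arrive at $E(u)+E(v)\ge 2(3-\sqrt{5})\pi^2$. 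Your observation that conjugating gives $xyx^{-1}=e^{2\pi i\theta}y$, whence $\tr(y)=\tr(xyx^{-1})=e^{2\pi i\theta}\tr(y)$ and therefore $\tr(x)=\tr(y)=0$, uses traciality at full strength and short-circuits the optimization entirely: once the traces vanish, the Parseval estimate gives $2E(x)\ge 4\pi^2\bigl(1-|\tr(x)|^2\bigr)=4\pi^2$ and likewise for $y$, so $\cL(\varphi)\ge 8\pi^2$. This is strictly stronger than the stated constant $4(3-\sqrt{5})\pi^2\approx 3.06\,\pi^2$ and, as you note, is attained by the identity endomorphism, so your bound is sharp (the same conjugation argument also strengthens the paper's Lemma to $E(u)+E(v)\ge 4\pi^2$). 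The only point worth making explicit is the standard fact that for $x$ in the domain of the closed derivation $\delta_j$ the Fourier coefficients of $\delta_j(x)$ are $2\pi i m\,a_{mn}$ (resp.\ $2\pi i n\,a_{mn}$), which justifies the diagonal formula for $2E(x)$; the paper relies on exactly the same identity in its own proof, so nothing further is needed.
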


Theorem~\ref{lower bound:thm} is a  direct consequence of  the following lemma.

\begin{lemma} \label{lower bound:lemma}
Let $\theta\in \Rb$ and let $u, v$ be unitaries in $A_{\theta}$ with $uv=\lambda vu$ for some $\lambda \in \Cb\setminus \{1\}$.
Then $E(u)+E(v)\ge 2(3-\sqrt{5})\pi^2$.
\end{lemma}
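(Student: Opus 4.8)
The plan is to extract from the commutation relation the single clean fact that $\tr(u)=\tr(v)=0$, and then read the energy bound directly off the Fourier expansion. First I would dispose of the trivial case: if $u$ or $v$ fails to lie in the domain of $\delta_1$ or $\delta_2$, then $E(u)+E(v)=\infty$ and there is nothing to prove, so I may assume that both $u,v$ lie in the common domain. The key observation is that $\tr$ is conjugation invariant. Rewriting $uv=\lambda vu$ as $vuv^{*}=\lambda^{-1}u$ and applying $\tr$ gives $\tr(u)=\tr(vuv^{*})=\lambda^{-1}\tr(u)$, whence $(1-\lambda^{-1})\tr(u)=0$; since $\lambda\neq 1$ this forces $\tr(u)=0$, and symmetrically (using $uvu^{*}=\lambda v$) $\tr(v)=0$.

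Next I would expand in the orthonormal basis $\{U^{m}V^{n}\}_{m,n\in\Zb}$ of $H$. Writing $u=\sum_{m,n}c_{m,n}U^{m}V^{n}$ in $\|\cdot\|_2$, the relations $\delta_1(U^mV^n)=2\pi i m\,U^mV^n$ and $\delta_2(U^mV^n)=2\pi i n\,U^mV^n$, together with the closedness of $\delta_1,\delta_2$, identify the domain condition with $\sum_{m,n}(m^2+n^2)|c_{m,n}|^2<\infty$ and give $\|\delta_1(u)\|_2^2=4\pi^2\sum_{m,n} m^2|c_{m,n}|^2$ and $\|\delta_2(u)\|_2^2=4\pi^2\sum_{m,n} n^2|c_{m,n}|^2$, so that $E(u)=2\pi^2\sum_{m,n}(m^2+n^2)|c_{m,n}|^2$. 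Since $u$ is unitary, $\sum_{m,n}|c_{m,n}|^2=\|u\|_2^2=\tr(u^*u)=1$, while $c_{0,0}=\tr(u)=0$ by the previous step. Hence all of the unit mass sits on modes with $m^2+n^2\geq 1$, and therefore $E(u)=2\pi^2\sum_{(m,n)\neq(0,0)}(m^2+n^2)|c_{m,n}|^2\geq 2\pi^2$. The identical computation gives $E(v)\geq 2\pi^2$.

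Adding the two bounds yields $E(u)+E(v)\geq 4\pi^2$, which is in fact strictly stronger than the asserted $2(3-\sqrt5)\pi^2$ (note $4>6-2\sqrt5$), so the lemma follows a fortiori. In this approach there is no real analytic obstacle: the whole content is concentrated in the one-line trace computation, and the only point requiring a little care is the justification of the Parseval formula for $E$ on the full domain of the closed derivations, equivalently that an element of $\mathrm{dom}\,\delta_1\cap\mathrm{dom}\,\delta_2$ has Fourier coefficients with $\sum_{m,n}(m^2+n^2)|c_{m,n}|^2<\infty$; this is routine, since the $U^mV^n$ are joint eigenvectors of $\delta_1$ and $\delta_2$. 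I would expect any difficulty in producing the precise constant $2(3-\sqrt5)\pi^2$ to be an artifact of a less direct argument, whereas the trace-vanishing route makes even the sharper constant $4\pi^2$ transparent and uses nothing about $\theta$.
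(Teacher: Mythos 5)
Your proof is correct, and it takes a genuinely different --- and in fact stronger --- route than the paper's. The paper extracts from the commutation relation only the identity $\tr(uv)=0$ (via $\tr(uv)=\tr(\lambda vu)=\lambda\tr(uv)$), rewrites $-\tr(u)\tr(v)=\tr((u-\tr(u))v)$, and then combines the Parseval-type estimates $|\tr(u)|^2\ge 1-\tfrac{1}{2\pi^2}E(u)$, $|\tr(v)|^2\ge 1-\tfrac{1}{2\pi^2}E(v)$, and $|\tr((u-\tr(u))v)|\le (\tfrac{1}{2\pi^2}E(u))^{1/2}$ into the inequality $(1-t)(1-s)\le t$, whose optimization produces the non-sharp constant $2(3-\sqrt{5})\pi^2$. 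You instead use the full strength of traciality on the conjugated form of the relation ($vuv^*=\lambda^{-1}u$ and $uvu^*=\lambda v$) to conclude $\tr(u)=\tr(v)=0$; the same Parseval estimate the paper uses, namely $1-|\tr(u)|^2=\|u-\tr(u)\|_2^2\le \tfrac{1}{2\pi^2}E(u)$, then immediately gives $E(u)\ge 2\pi^2$ and $E(v)\ge 2\pi^2$, hence $E(u)+E(v)\ge 4\pi^2$. This is sharp (take $u=U$, $v=V$) and strictly improves both the lemma and the consequent bound in Theorem~\ref{lower bound:thm} from $4(3-\sqrt{5})\pi^2$ to $8\pi^2$. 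The one point you flag --- justifying $\langle \delta_1(u), U^mV^n\rangle = 2\pi i m\, a_{m,n}$ and its analogue for $\delta_2$ on the full domain of the closed derivations --- is exactly the step the paper already uses implicitly in its display for $(2\pi)^2\|u-\tr(u)\|_2^2$, so your argument introduces no new analytic difficulty while dispensing with the quadratic optimization entirely.
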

\begin{proof}
We have
$$ \tr(uv)=\tr(\lambda vu)=\lambda\tr(uv),$$
and hence $\tr(uv)=0$. Thus
\begin{eqnarray*}
 -\tr(u)\tr(v)&=&\tr(uv-\tr(u)\tr(v))=\tr((u-\tr(u))v)+\tr(\tr(u)(v-\tr(v)))\\
 &=&\tr((u-\tr(u))v).
\end{eqnarray*}

We may assume that both $u$ and $v$ belong to the domains of $\delta_1$ and $\delta_2$. For any $m, n\in \Zb$, denote by $a_{m, n}$ the Fourier coefficient
$\left<u, U^mV^n\right>$ of $u$.
Then $a_{0, 0}=\tr(u)$, and
\begin{eqnarray*}
 (2\pi)^2\|u-\tr(u)\|_2^2&=&\sum_{m,n\in \Zb, m^2+n^2>0}|2\pi a_{m, n}|^2\\
 &\le &\sum_{m,n\in \Zb, m^2+n^2>0}|2\pi a_{m, n}|^2(m^2+n^2)\\
&=&\|\delta_1(u)\|_2^2+\|\delta_2(u)\|_2^2=2E(u).
\end{eqnarray*}
Thus
$$ |\tr(u)|^2=\|\tr(u)\|_2^2=\|u\|_2^2-\|u-\tr(u)\|_2^2\ge 1-\frac{1}{2\pi^2}E(u),$$
and
$$ |\tr((u-\tr(u))v)|\le \|(u-\tr(u))v\|_2=\|u-\tr(u)\|_2\le (\frac{1}{2\pi^2}E(u))^{1/2}.$$
Similarly, $|\tr(v)|^2\ge 1-\frac{1}{2\pi^2}E(v)$.
%We also have
%\begin{eqnarray*}
% -\tr(u)\tr(v)&=&\tr(uv-\tr(u)\tr(v))=\tr((u-\tr(u))v)+\tr(\tr(u)(v-\tr(v)))\\
% &=&\tr((u-\tr(u))v).
%\end{eqnarray*}

Write $\frac{1}{2\pi^2}E(u)$ and $\frac{1}{2\pi^2}E(v)$ as $t$ and $s$ respectively. We just need to show
that $t+s\ge 3-\sqrt{5}$. If $t\ge 1$ or $s\ge 1$, then this is trivial. Thus we may assume that $1-t, 1-s> 0$.
%$$ (1-\frac{1}{2\pi}(2E(u))^{1/2})(1-\frac{1}{2\pi}(2E(v))^{1/2})\le |\tr(u)\tr(v)|\le \frac{1}{2\pi}(2E(u))^{1/2}.$$
%Write $\frac{1}{2\pi}(2E(u))^{1/2}$ and $\frac{1}{2\pi}(2E(v))^{1/2}$ as $t$ and $s$ respectively.
%Then the above inequality becomes $(1-t)(1-s)\le t$.
Then
$$(1-t)(1-s)\le |\tr(u)\tr(v)|^2\le t.$$
Equivalently, $t(1-s)\ge 1-(t+s)$.
%Similarly, $2s+t\ge 1+ts$.
Without of loss generality, we may assume $s\ge t$.
Write $t+s$ as $w$. Then
$$ t(1-w/2)\ge t(1-s)\ge 1-(t+s)=1-w,$$
and hence
$$ w=t+s\ge \frac{1-w}{1-w/2}+\frac{w}{2}.$$
It follows that $w^2-6w+4\le 0$. Thus $w\ge 3-\sqrt{5}$.
%Adding these two inequalities together, we get $3(t+s)\ge 2+2ts\ge 2$.
%This proves the claim.
%
%Now we have
%$$ E(u)+E(v)=2\pi^2(t^2+s^2)\ge \pi^2(s+t)^2\ge \frac{4}{9}\pi^2.$$
\end{proof}

%%%%%%%%%%%%%%%%%%%%%%%%%%%%%%%%%%%%%%%%%%%%%%%%%%%%%%%%%%%%%%%%%%%%%%%%%%%%%%


\begin{thebibliography}{999}


\bibitem{BKR}
B. Blackadar, A. Kumjian, and  M. R{\o}rdam. Approximately central matrix units and the structure of noncommutative tori.
{\it $K$-Theory}  {\bf 6}  (1992),  no. 3, 267--284.

\bibitem{BCEN}
B. Brenken, J. Cuntz, G. A. Elliott, and R. Nest. On the classification of noncommutative tori. III.
In: {\it Operator Algebras and Mathematical Physics (Iowa City, Iowa, 1985)},
pp. 503--526, Contemp. Math., 62, Amer. Math. Soc., Providence, RI, 1987.


\bibitem{CEGJ}
J. Cuntz, G. A. Elliott, F. M. Goodman, and P. E. T. Jorgensen.
On the classification of noncommutative tori. II.  {\it C. R. Math. Rep. Acad. Sci. Canada}  {\bf 7}  (1985),  no. 3, 189--194.


\bibitem{Elliott}
G. A. Elliott. On the classification of $C^*$-algebras of real rank zero.
{\it J. Reine Angew. Math.}  {\bf 443}  (1993), 179--219.

\bibitem{EE}
G. A. Elliott and D. E. Evans. The structure of the irrational rotation $C^*$-algebra.
{\it Ann. of Math. (2)}  {\bf 138}  (1993),  no. 3, 477--501.


\bibitem{MR}
V. Mathai and J. Rosenberg. A noncommutative sigma-model. arXiv:0903.4241.

\bibitem{PW}
W. Pusz and S. L. Woronowicz. Passive states and KMS states for general quantum systems.
{\it Comm. Math. Phys.}  {\bf 58}  (1978), no. 3, 273--290.

%\bibitem{Rieffel87}
%M. A. Rieffel. The homotopy groups of the unitary groups of noncommutative tori.
%{\it J. Operator Theory}  {\bf 17}  (1987),  no. 2, 237--254.

\bibitem{Rordam}
M. R{\o}rdam. Classification of nuclear, simple $C^*$-algebras.
In: {\it Classification of Nuclear $C^*$-Algebras. Entropy in Operator Algebras},  pp. 1--145,
Encyclopaedia Math. Sci., 126, Springer, Berlin, 2002.

\bibitem{Rosenberg}
J. Rosenberg. Noncommutative variations on Laplace's equation.  {\it Anal. PDE}  {\bf 1}  (2008),  no. 1, 95--114.

\bibitem{Slawny}
J. Slawny. On factor representations and the $C^*$-algebra of canonical commutation relations.
{\it Comm. Math. Phys.} {\bf 24} (1972), 151--170.


\end{thebibliography}
\end{document}